
\documentclass[final,3p,times]{elsarticle}

\usepackage{graphics}
\usepackage{amsmath,amsthm}
\usepackage{latexsym}

\usepackage[utf8]{inputenc}
\usepackage[T1]{fontenc}
\usepackage{amssymb}
\usepackage{color}
\usepackage{natbib}
\usepackage{hyperref}
\usepackage{stmaryrd}
\SetSymbolFont{stmry}{bold}{U}{stmry}{m}{n}


 \usepackage[section]{placeins}


\newcommand\figref[1]{Fig.~\ref{#1}}


\newcommand{\Lebesgue}   {{L}}

\newtheorem{theorem}{Theorem}
\newtheorem{lemma}[theorem]{Lemma}
\newtheorem{remark}[theorem]{Remark}
\newtheorem{method}[theorem]{Method}
\newtheorem*{definition*}{Variational Principle.:}

\makeatletter
\def\Eqlfill@{\arrowfill@\Relbar\Relbar\Relbar}
\newcommand{\extendEql}[1][]{\ext@arrow 0359\Eqlfill@{#1}}
\makeatother

\usepackage[usenames,dvipsnames]{xcolor}

\definecolor{FKcolor}{rgb}{0.0705882,0.227451,0.917647}
\definecolor{SMScolor}{rgb}{0.847059,0.278431,0.819608}
\definecolor{HEcolor}{rgb}{0.917647,0.368627,0.0705882}
 
\newcommand\FKout{\bgroup\markoverwith{\textcolor{FKcolor}{\rule[0.5ex]{2pt}{0.4pt}}}\ULon}
\newcommand\SMSout{\bgroup\markoverwith{\textcolor{SMScolor}{\rule[0.5ex]{2pt}{0.4pt}}}\ULon}
\newcommand\HEout{\bgroup\markoverwith{\textcolor{HEcolor}{\rule[0.5ex]{2pt}{0.4pt}}}\ULon}


\journal{Applied Mathematics and Computation}

\begin{document}
%
\begin{frontmatter}
\title{Transparent boundary conditions for a Discontinuous Galerkin Trefftz method}
\author[gsc,math]{Herbert Egger}\ead{eggermathematik.tu-darmstadt.de}
\author[gsc,temf]{Fritz Kretzschmar\corref{cor}}\ead{kretzschmar@gsc.tu-darmstadt.de}
\author[ifc]{Sascha M. Schnepp}\ead{schnepps@ethz.ch}
\author[dece]{Igor Tsukerman}\ead{igor@uakron.edu}
\author[gsc,temf]{Thomas Weiland}\ead{thomas.weiland@temf.tu-darmstadt.de}
\address[gsc]{Graduate School of Computational Engineering, TU Darmstadt, Dolivostrasse 15, 64293 Darmstadt, Germany}
\address[temf]{Institut fuer Theorie Elektromagnetischer Felder, TU Darmstadt, Schlossgartenstrasse 8  , 64289 Darmstadt, Germany}
\address[ifc]{Institute of Geophysics, Department of Earth Sciences, ETH Zurich, CH-8092 Zurich, Switzerland}
\address[math]{AG Numerik und Wissenschaftliches Rechnen, TU Darmstadt, Dolivostrasse 15, 64293 Darmstadt, Germany}
\address[dece]{Department of Electrical \& Computer Engineering, The University of Akron, Akron, Ohio 44325-3904,USA}
\cortext[cor]{Corresponding Author}
%
%
\begin{abstract}
The modeling and simulation of electromagnetic wave propagation is often accompanied by a restriction to bounded domains which requires the introduction of artificial boundaries. The corresponding boundary conditions should be chosen in order to minimize parasitic reflections.
In this paper, we investigate a new type of transparent boundary condition for a discontinuous Galerkin Trefftz finite element method.
The choice of a particular basis consisting of polynomial plane waves allows us to split the electromagnetic 
field into components with a well specified direction of propagation. 
The reflections at the artificial boundaries are then reduced by penalizing components of the field 
incoming into the space-time domain of interest.
We formally introduce this concept, discuss its realization within the discontinuous Galerkin framework, and demonstrate the performance of the resulting approximations by numerical tests. A comparison with 
first order absorbing boundary conditions, that are frequently used in practice, is made.
For a proper choice of basis functions, we observe spectral convergence in our numerical test and an overall dissipative behavior for which we also give some theoretical explanation.
\end{abstract}

\begin{keyword}
transparent boundary conditions\sep
discontinuous Galerkin method\sep
finite element method\sep
Trefftz methods\sep
electrodynamics\sep
wave propagation
\end{keyword}

\end{frontmatter}

\def\H{\mathbf{H}}
\def\E{\mathbf{E}}
\def\Eup{\mathrm{E}}
\def\Hup{\mathrm{H}}
\def\P{\mathbf{P}}
\def\F{\mathbf{F}}
\def\A{\mathbf{A}}
\def\B{\mathbf{B}}
\def\G{\mathbf{G}}
\def\g{\mathbf{g}}
\def\c{\mathbf{c}}
\def\n{\mathbf{n}}
\def\d{\mathbf{d}}
\def\h{\mathbf{h}}
\def\e{\mathbf{e}}
\def\r{\mathbf{r}}
\def\v{c}
\def\k{\mathbf{k}}
\def\u{\mathbf{v}}
\def\he{\widehat{\e}}
\def\hh{\widehat{\h}}
\def\hd{\widehat{\d}}
\def\Z{Z}
\def\Zinv{Z^{-1}}

\def\TT{\mathbb{T}}
\def\PP{\mathbb{P}}
\def\RR{\mathbb{R}}
\def\VV{\mathbb{V}}

\def\Oh{\Omega_h}
\def\K{K}
\def\f{f}
\def\dK{{\partial \K}}
\def\In{I^n}
\def\F{\mathcal F}
\def\Fih{{\F^{int}_h}}
\def\Fdh{{\F^{bdr}_h}}

\def\ut{t^{n-1}}
\def\ot{t^{n}}
\def\ve{\mathbf{v}^\E}
\def\vh{\mathbf{v}^\H}
\def\be{\mathbf{b}^\E}
\def\bh{\mathbf{b}^\H}
\def\b{\mathbf{b}}
\def\re{\mathbf{r}^\E}
\def\rh{\mathbf{r}^\H}

\def\grad{\mathbf{\nabla}}
\def\curl{\grad \times}
\def\div{\grad \cdot}
\def\dt{\partial_t}
\def\RR{\mathbb{R}}
\def\rot{\text{curl}}
\def\Rot{\text{Curl}}
\def\eps{\epsilon}

\section{Introduction}\label{sec:intro} 
%
We consider the propagation of electromagnetic waves in a domain $\Omega$ filled with a non-conducting dielectric medium. 
In the absence of charges and source currents, the evolution of the electromagnetic fields is governed by the time-dependent Maxwell equations
\begin{align} \label{eq:maxwell}
    \mu \dt  \H + \curl \E = 0 \qquad \text{and} \qquad 
    \eps \dt \E - \curl \H  = 0 \qquad \text{in } \Omega \times \RR_+.
\end{align}
The electric permittivity $\eps$ and the magnetic permeability $\mu$ are assumed to be piecewise constant. 
At time $t=0$ the electric and magnetic fields $\E$ and $\H$ are prescribed by the initial conditions
\begin{align} \label{eq:initial}
\E\left(0\right) = \E_0 \qquad \text{and} \qquad  \H\left(0\right) = \H_0 \qquad \text{in } \Omega. 
\end{align}
If the fields satisfy the constraint conditions $\div \left(\eps \E_0\right)=0$ and $\div \left(\mu \H_0\right)=0$ in the beginning, then 
\begin{align} \label{eq:gauge}
\div \left(\eps \E\right) =0 \qquad \text{and} \qquad \div\left(\mu \H\right)=0 \qquad \text{on } \Omega \times \RR_+,
\end{align}
which follows by taking the divergence in \eqref{eq:maxwell}. 
The two constraint conditions in \eqref{eq:gauge} express the absence of electric charges and magnetic monopoles, respectively.
If the computational domain $\Omega$ is bounded, the system has to be complemented by appropriate boundary conditions. 
We will consider different types of conditions that all can be cast in the general abstract form 
\begin{align} \label{eq:boundary}
    \b \left( \E, \H \right) = \n \times \g \qquad \text{on } \partial \Omega \times \RR_+;
\end{align}
here $\n$ is the outward directed unit normal vector at the domain boundary.

Problems that are described by such a system of equations arise in various applications, for instance, in the modeling of 
optical wave guides \cite{Hadley92} or in antenna design \cite{Milligan05}.
In such cases, boundary conditions of the form 
\begin{align} \label{eq:impedance}
\alpha \, \n \times \E - \beta \, \n \times \left( \H \times \n \right) = \n \times\g,
\end{align}
may be used to model various physically relevant situations, e.g., the presence of perfect electric and magnetic conductors or the action of surface currents describing the emission of energy by an antenna, but also the 
presence of artificial boundaries resulting from a truncation of the domain $\Omega$ which is often introduced to make a simulation feasible.
Following the physical intuition, appropriate boundary conditions at such artificial boundaries should allow waves to leave the domain $\Omega$ without significant reflection.
The \emph{first order absorbing boundary condition}
\begin{align}  \label{eq:silvermueller}
\n \times \E - \Z \, \n \times \left( \H \times \n \right)= 0,
\end{align}
is widely used for this purpose; here $Z=\sqrt{\mu/\eps}$ is the intrinsic impedance of the medium. 
This condition mimics the Silver-M\"uller radiation condition \cite{Muller52,Barucq97,Li14}, 
and it is satisfied exactly by plane waves propagating in the outward normal direction. 
A brief inspection of the Poynting vector 
\begin{align*} 
\n \cdot \P = \n \cdot \left(\E \times \H\right) = \H \cdot \left(\n \times \E\right) = \Z \; |\n \times \H|^2
\end{align*}
reveals that energy is dissipated by transmission through the boundary at every point on the boundary.
We will refer to this condition as {\it first-order absorbing} or {\it Silver-M\"uller} condition throughout the paper.

The simple choice \eqref{eq:silvermueller} can be improved in several ways: 
In \cite{Joly89}, a more accurate absorbing boundary condition is formulated that still involves only first order derivatives of the fields; for a stability analysis, see also \cite{Sonnendrucker10}. 
Other possibilities include the classical Bayliss-Turkel and Enquist-Majda conditions~\cite{Turkel80,Turkel85} and ~\cite{Engquist77,Engquist79}, which allow to systematically construct conditions for arbitrary order. Due to lack of stability, these are however hardly ever used in practice. 
Let us also mention more recent approaches developed by Warburton, Hagstrom, Higdon, and others~\cite{Warburton04,Hagstrom07,Hagstrom98,Higdon86,Higdon87}, 
the pole condition for the Dirichlet-to-Neumann operator, or the use of infinite elements. 
Another strategy to minimize reflections from the artificial boundaries is to add an exterior absorbing layer, in which the fields decay very fast. This approach, known as perfectly matched layers, has been used very successfully in practice~\cite{Berenger94,Berenger96}. The appropriate choice of geometric and physical parameters of the absorbing layer is however not always completely clear in practice, and in some cases it may be necessary to extend the computational domain substantially. 
In principle, it is also possible to formulate exact boundary conditions, e.g., by the coupling to a boundary integral formulation for the exterior domain~\cite{Claes80,Song87}. This treatment leads to boundary conditions that are non-local in space and/or time~\cite{Hiptmair03,Kurz99}, which complicates numerical realization.
For a review and a comparison 
of various kinds of non-absorbing, transparent, or non-reflecting boundary conditions, let us refer to~\cite{zschiedrichphd,deaphd} and the references given there.

\medskip

In this paper, we follow a different strategy for devising local transparent boundary conditions.
The intuition behind our approach is the following:
Motivated by some of the approaches mentioned above, we assume that at any point of the boundary the electromagnetic fields can be expanded into, or at least approximated by, a superposition  
$\left(\E,\H\right) = \sum_j c_j \; \left(\e_j,\h_j\right)$ of plane waves propagating into specific directions $\d_j=\e_j \times \h_j$. 
The three vectors $\d_j$,$\e_j$, and $\h_j$ are assumed to be normalized and orthogonal.
If the wave is not reflected at the boundary, one would expect that
\begin{align} \label{eq:abc}
c_j = 0 \qquad \text{for any direction $\d_j$ with} \quad \n \cdot \d_j < 0.
\end{align}
Incorporating such a condition in an adequate manner into a numerical scheme should therefore 
help to suppress non-physical reflections at artificial boundaries. 
A similar idea has been used previously in the context of finite difference Trefftz schemes~\cite{Tsukerman06,Tsukerman-book07}. 
%
%
To evaluate the stability of such a boundary condition, let us again consider the energy flux
\begin{align*}
\n \cdot \P 
&= \sum_{i^{\prime},j^{\prime}} c_{i^{\prime}} c_{j^{\prime}} \; \n \cdot \left(\e_{i^{\prime}} \times \h_{j^{\prime}}\right)  
 = \sum_{j^{\prime}} c_{j^{\prime}}^2 \; \left(\n \cdot \d_{j^{\prime}}\right) + \sum_{i^{\prime}\ne j^{\prime}}  c_{i^{\prime}} c_{j^{\prime}} \; \n \cdot \left(\e_{i^{\prime}} \times \h_{j^{\prime}}\right)
\end{align*}
across the boundary. 
Note that because of \eqref{eq:abc}, the summation only runs over indices with
$\n \cdot \d_{i^{\prime}} >  0$ and $\n \cdot \d_{j^{\prime}}>0$.
If the wave at the boundary is mainly propagating in one out-going direction, one can 
argue that the last term is dominated by the first term on the right hand side, and 
one obtains outflow of energy over the boundary. 

In order to incorporate a boundary condition related to \eqref{eq:abc} into a numerical method, 
one has to be able to split the approximation of the electromagnetic field locally into plane waves. 
This could be realized within the framework of generalized finite elements~\cite{Babuska92,Melenk96} or 
via Trefftz finite difference approximations~\cite{Tsukerman05,Tsukerman-book07,Tsukerman06}.
Another possibility is provided by the discontinuous Galerkin framework \cite{Reed1973,Cockburn2001,Fezoui2005}, 
which allows one to systematically couple almost arbitrary local approximations 
for the simulation on the global level and to incorporate rather general boundary 
and interface conditions by some sort of penalization.

In this paper, we consider a space-time discontinuous Galerkin framework for Maxwell's equations similar to that introduced in~\cite{Monk14,Lilienthal14}, and we utilize polynomial Trefftz functions for the local approximation which satisfy \eqref{eq:maxwell}-\eqref{eq:gauge} exactly on every element.
This results in a discontinuous Galerkin Trefftz method that has previously been described in (1+1) dimensions~\cite{Kretzschmar14} and later in (3+1) dimensions~\cite{sisc_2014}; 
see also \cite{Petersen:2009id,Farhat14} for a related Trefftz method in acoustics.
%
%
The numerical approximation of partial differential equations by Trefftz functions has been proposed in \cite{trefftz1926} and since then been investigated intensively; see e.g. \cite{Ruge89,Jirousek97,Herrera2000}. 
Since for the problem under investigation, the Trefftz functions depend on space and time, we automatically arrive at a space-time method. Let us refer to \cite{Ruge89,Herrera2000} for a review on the topic and also to \cite{Huttunen07,Badics08,Moiola:2011io} for wave propagation problems in the frequency domain.

One of the basic building blocks of our method is the explicit construction of a basis for the local Trefftz spaces consisting of polynomial plane waves. This allows us to obtain the required local splitting of the discrete electromagnetic fields into plane waves.
The second step consists in formulating a variational form of the absorbing boundary condition \eqref{eq:abc} that can be incorporated within the discontinuous Galerkin framework.
Similar to the realization of other boundary conditions in a discontinuous Galerkin method, the condition \eqref{eq:abc} will be satisfied approximately by some sort of penalization.
%

To illustrate the benefits of our approach, we present numerical tests including a comparison with first order absorbing boundary conditions. In our computations, we observe spectral convergence for a model problem, provided that the propagation directions of the polynomial plane wave basis functions are chosen appropriately. This indicates that the boundary condition may formally be accurate of arbitrary order. With our numerical tests, we also illustrate energy dissipation and thus stability of the absorbing boundary conditions.

%

\medskip 

The outline of the paper is as follows: 
In Section~\ref{sec:method}, we introduce the space-time discontinuous Galerkin framework which is the basis for our numerical method.
In Section~\ref{sec:basis}, we then construct the plane wave basis for the local Trefftz approximation spaces and we sketch the construction for two-dimensional problems underlying our numerical tests. 
The implementation of the new transparent boundary condition is discussed in detail in Section~\ref{sec:boundary},  
and results of numerical tests for two simple test problems are reported in Section~\ref{sec:nr}.
The presentation closes with a short summary.

\section{A space-time Discontinuous Galerkin formulation} \label{sec:method}
%

For the numerical simulation of the initial boundary value problem \eqref{eq:maxwell}--\eqref{eq:boundary}, 
we consider a space-time discontinuous Galerkin method. 
We utilize Trefftz polynomials for the local approximations which, by definition, satisfy Maxwell's equations exactly. 
An appropriate choice of the basis allows us to expand the numerical solution locally into polynomial plane waves and to apply our new transparent boundary condition. 
In this section, we introduce the general framework of the method. The construction of a plane wave basis for the polynomial Trefftz space and incorporation of the boundary conditions will be addressed in the following two sections.

\subsection{Notation}
Let $\Oh = \{\K\}$ be a non-overlapping partition of the domain $\Omega$ into regular elements $\K$, e.g., tetrahedral, parallelepipeds, prisms, etc. 
We denote by $\Fih = \{\f = \partial \K \cap \partial \K', \ \K \ne \K' \in \Oh\}$ the set of element interfaces and by $\Fdh = \{\f = \partial \K \cap \partial\Omega, \ \K \in \Oh\}$ the set of faces on the boundary. 
On an element interface $\f= \partial\K_1 \cap \partial\K_2$, 
any piecewise smooth function $\u \in C\left(\Oh\right)^3$ takes on two values 
$\u_1 = \u|_{\K_1}$ and $\u_2=\u|_{\K_2}$. We then denote by
\begin{align*}
\{\u\} = \frac{1}{2}\left(\u_1 + \u_2\right), 
\qquad 
[\n \times \u] = \n_1 \times \u_1 + \n_2 \times \u_2,
\end{align*}
the {\it average} and the {\it jump of the tangential component} of $\u$ across $f = \partial\K_1 \cap \partial\K_2$, respectively;
here $\n_i$ denotes the outward normal vector on the boundary of the element $\K_i$.
Now let $0=t_0 < t_1 < \ldots$ be a partition of the time axis into 
intervals $\In = [\ut,\ot]$. 
For every space-time element $\K \times \In$ with $\K \in \Oh$, we denote by $\PP_p\left(\K \times \In\right)$ the space of polynomials in four variables 
with order up to $p$. We assume that $\eps$ and $\mu$ are constant on $\K \times \In$, and call
\begin{align}  \label{eq:localtrefftz}
\TT_p\left(\K \times \In\right) = \big{\{} \left(\E,\H\right) \in [\PP_p\left(\K \times \In\right)]^6 : \eps \dt \E- \curl \H = 0, \ \mu \dt \H + \curl \E =0, \ \div\E=0, \ \div \H=0 \big{\}} 
\end{align}
the space of {\it local Trefftz polynomials}; this is the space of vector valued polynomials up to order $p$ satisfying Maxwell's equations \eqref{eq:maxwell} and the constraint conditions \eqref{eq:gauge} exactly on the corresponding space-time element.

\subsection{The space-time DG framework}
For the discretization of the wave propagation problem \eqref{eq:maxwell}--\eqref{eq:boundary}, we consider a space-time discontinuous Galerkin framework in the spirit of~\cite{Monk14,Lilienthal14}, but with different approximation spaces and a particular choice of numerical fluxes. 
On every time slab $\Omega \times I^n$, we approximate the field $\left(\E,\H\right)$ by {\em piecewise polynomial Trefftz functions} in
\begin{align} \label{eq:space}
\TT_p\left(\Oh \times I^n\right) := \big{\{} \left(\E,\H\right) : \Omega \times \In \to \RR^6 : \left(\E,\H\right)|_{\K \times I^n} \in \TT_p\left(\K \times \In\right) \quad \text{for all } \K \in \Oh \big{\}}. 
\end{align}
%
Using these approximation spaces in a space-time discontinuous Galerkin framework of \cite{sisc_2014} yields
\begin{method}[Space-time discontinuous Galerkin Trefftz method] \label{meth:stdg} $ $\\
Set $\E_h^0=\E_0$, $\H_h^0=\H_0$. For $n \ge 1$ find $\left(\E_h^n,\H_h^n\right) \in \TT_p\left(\Oh \times I^n\right)$ such that 
for all $\left(\ve,\vh\right) \in \TT_p\left(\Oh \times I^n\right)$
\begin{align*} 
& \sum_{\K \in \Oh} \int_\K \eps \E_h^n\left(\ut\right) \cdot \ve\left(\ut\right) + \mu \H_h^n \left(\ut\right) \cdot\vh\left(\ut\right) 
+ \sum_{\f \in \Fih} \int_{\f \times \In}  [\n \times \H_h^n] \cdot \{\ve\} - [\n \times \E_h^n] \cdot \{\vh\}  
\\ & \qquad \qquad \qquad \qquad 
  + \sum_{\f \in \Fdh} \int_{\f \times \In} b(\n \times \E_h^n, \n \times \H_h^n;\ve,\vh) \\
&= 
 \sum_{\K \in \Oh} \int_\K \eps \E_h^{n-1}\left(\ut\right) \cdot \ve\left(\ut\right) + \mu  \H_h^{n-1}\left(\ut\right) \cdot\vh\left(\ut\right)   
   + \sum_{\f \in \Fdh} \int_{\f \times \In} r(\n \times \g; \ve,\vh).  
\end{align*}
\end{method}
This scheme amounts to the methods presented in \cite{Kretzschmar14,sisc_2014} with a particular 
choice of numerical fluxes.
Note that, in order to complete the definition of Method~\ref{meth:stdg}, we still have to specify the bilinear and linear terms $b$ and $r$ that account for the boundary conditions. This will be done in Section~\ref{sec:boundary}.
\subsection{Basic properties of the method} \label{sec:properties}

Before we proceed, let us make some general remarks about this numerical scheme; see \cite{sisc_2014} for details and proofs.

(i) Since the approximating functions satisfy Maxwell's equations exactly on every element, 
the formulation only contains spatial and temporal interface terms, which penalize the tangential discontinuity of the fields.

(ii) Assume that the true solution $\left(\E,\H\right)$ of problem \eqref{eq:maxwell}--\eqref{eq:boundary} is sufficiently smooth and that the boundary terms are consistently chosen, 
%
e.g., such that $b(\n \times \E, \n \times \H;\ve,\vh) =  r(\n \times \g;\ve,\vh)$ holds for every point on the boundary.
Under this assumption, the whole method is consistent, i.e., any smooth solution of the problem \eqref{eq:maxwell}--\eqref{eq:boundary} also 
satisfies the discrete variational principle. 
To see this, let us have a closer look onto the discrete variational problem: 
by tangential continuity of the fields, the last term in the first line drops out. 
Due to continuity in time, the first terms of the first and third line cancel, 
whereas the boundary terms cancel by assumption. 
%

(iii) Under mild conditions on the boundary terms, the discrete variational problem for one time slab can be shown to be well-posed. 
Let $h$ denote the spatial mesh-size, $\tau=\ot-\ut$ the size of the time step, and $d$ the spatial dimension.
Then the first two terms, which are symmetric positive definite, scale like $h^d$ while the interface and boundary terms scale like $\tau h^{d-1}$. Therefore, the left hand side of the variational principle defines an 
elliptic bilinear form provided that the time step size is not too large. The smallness condition on $\tau$ can be dropped, 
if the boundary terms are dissipative in nature, which is the case for many relevant conditions; 
see the remark at the end of (iv).

(iv) The following energy identity holds
\begin{align*}
& \frac{1}{2} \big(  \|\eps^{1/2} \E_h^n(\ot)\|^2_\Omega + \|\mu^{1/2} \H_h^n(\ot)\|_\Omega^2 \big) 
=       \frac{1}{2} \big(  \|\eps^{1/2} \E_h^{n-1}(\ut)\|^2_\Omega + \|\mu^{1/2} \H_h^{n-1}(\ut)\|_\Omega^2 \big) \\ 
& \qquad \qquad
  -\frac{1}{2} \|\eps^{1/2} \big( \E_h^n(\ut) - \E_h^{n-1}(\ut)\big) \|^2_\Omega 
  - \frac{1}{2} \|\mu^{1/2} \big(\H_h^{n}(\ut) - \H_h^{n-1}(\ut) \big)\|_\Omega^2 \\
& \qquad \qquad 
  + \int_{\partial\Omega \times \In} r(\n \times \g; \E_h^n,\H_h^n) 
       -  b(\n \times \E_h^n, \n \times \H_h^n;\E_h^n,\H_h^n) 
       - \n \times \E_h^n \cdot \H_h^n. 
\end{align*}
This can be seen from adding a zero term $\sum_K \int_K (\eps \dt \E_h^n - \curl \H^h_n) \cdot \ve + (\mu \dt \H^h_n + \curl \E^h_n)=0$ to the variational principle, testing with $\ve=\E_h^n$ and $\vh=\H_h^n$, 
and some elementary algebraic manipulations; see \cite{sisc_2014} for details and proofs. 
The boundary term with $\n \times \E \cdot \H$ arises from partial integration of one curl operator.
%

%
(v) Assume that $b(\n \times \E_h^n, \n \times \H_h^n;\E_h^n,\H_h^n) + \n \times \E_h^n \cdot \H_h^n \ge 0$, which we call condition (D).
Then the discrete variational problem is well-posed without any restriction on the time step size. 
Condition (D) is in fact valid for various types of boundary conditions; see Section~\ref{sec:boundary} for details.
If additionally $\r = 0$, then the discrete electromagnetic energy defined by $\mathcal{E}_h\left(t^n\right) = \int_\Omega \eps |\E_h^n\left(t_n\right)|^2 + \mu |\H_h^n\left(t_n\right)|^2$ is monotonically decreasing in time.
%
%
We therefore call boundary conditions having the property (D) 
of {\it dissipative} nature. 

For details and proofs and some further properties of the resulting scheme, let us refer to ~\cite{sisc_2014}; similar results for related discontinuous Galerkin methods based on more standard polynomial spaces can be found in~\cite{Monk14,Lilienthal14}.

\subsection{Implementation}\label{sec:numerical-imp}
%

Method~\ref{meth:stdg} yields an implicit time stepping scheme. 
To evolve the discrete solution from time step $n-1$ to time step $n$, one has
to solve a linear system corresponding to the discrete variational problem. 
Let us sketch the basic structure of this system:
After choosing a basis $\{\left(\e_j^n,\h_j^n\right) : j=1,\ldots,J\}$ 
for the piecewise Trefftz space $\TT_p\left(\Oh \times \In\right)$, we can expand 
the approximate solution with respect to this basis into $\left(\E_h^n,\H_h^n\right) = \sum_j c_j^n \left(\e_j^n,\h_j^n\right)$. The discrete 
variational problem of Method~\ref{meth:stdg} is then equivalent to 
the linear system 
$$\A^n \c^n = \B^n \c^{n-1} + \G^n$$
with matrices $\A^n$, $\B^n$, and vector $\G^n$ defined by 
\begin{align*}
\A^n_{ij} &= \sum_{\K \in \Oh} \int_\K \eps \e_j^n\left(\ut\right) \cdot \e_i^n\left(\ut\right) + \mu \h_j^n \left(\ut\right) \cdot\h_i^n\left(\ut\right) 
+ \sum_{\f \in \Fih} \int_{\f \times \In}  [\n \times \h_j^n] \cdot \{\e_i^n\} - [\n \times \e_j^n] \cdot \{\h_i^n\}  
\\ & \qquad \qquad \qquad \qquad \qquad \qquad 
  + \sum_{\f \in \Fdh} \int_{\f \times \In} b(\n \times \e_j^n, \n \times \h_j^n;\e_i^n,\h_i^n) \\  
\B^n_{ij} &=  \sum_{\K \in \Oh} \int_\K \eps \e_j^{n-1}\left(\ut\right) \cdot \e_i^n\left(\ut\right) + \mu  \h_j^{n-1}\left(\ut\right) \cdot\h_i^n\left(\ut\right)   
\\
\G^n_{i} &= \sum_{\f \in \Fdh} \int_{\f \times \In} r(\n \times \g;\e_i^n,\h_i^n).  
\end{align*}
%
According to point (iii) in the discussion of Section~\ref{sec:properties}, 
the matrix $A^n$ is positive definite, 
provided that the time step $\tau$ is not too large in comparison with the mesh size.
For {\it dissipative} boundary conditions, this holds without restriction on the size of the time step.  
In general, $A^n$ will however not be symmetric.

Also note that up to translation of time, the same basis can be used on every time slab. 
Therefore, if the size of the time step is kept constant, i.e., $t^n = t^{n-1}+ \tau$ for all $n$ with some $\tau>0$, then the matrices $\A^n$ and $\B^n$ are independent of $n$. 
This situation is particularly convenient from 
a computational point of view, since a factorization of the matrix $\A=\A^n$ may then be computed once \textit{a-priori}, 
and the update of the coefficient vectors $\c^n$ from step $n-1$ to $n$ only requires 
one matrix-vector multiplication and a forward-backward substitution. 
Even if no factorization of $\A$ is available, the linear system can be solved with acceptable computational effort by some iterative method, as the matrix $\A$ stems from discretization of a linear hyperbolic problem and therefore usually has a moderate condition number. 

\section{A basis for the space of Trefftz polynomials}\label{sec:basis}

For the local approximation of the electromagnetic fields on every space-time element $\K \times \In$, 
we use vector valued polynomials satisfying Maxwell's equations
\eqref{eq:maxwell} and the constraint conditions \eqref{eq:gauge} exactly. 
In this section, we construct a particular basis for this space of Trefftz polynomials consisting 
of polynomial plane waves, and we discuss some basic properties of this construction. 
Since we only consider single elements $\K \times \In$, we assume throughout this 
section that the material parameters $\eps$ and $\mu$ are positive constants.

\subsection{Polynomial plane wave functions}

As a basis for the local Trefftz space on the element 
$K \times \In$, we consider polynomial plane waves of the form 
\begin{align} \label{eq:basis}
 \F_{p,i} \left(\r,t\right) = \left(\begin{array}{c} \he_{p,i} \\ \Zinv \hh_{p,i}\end{array}\right) 
\varphi_{p,i}\left(\r,t\right) \qquad \text{with} \qquad \varphi_{p,i}\left(\r,t\right) = 
\left( \hd_{p,i} \cdot \r - \v \, t\right)^p.
\end{align}
The hat symbols are used to denote constant vectors of unit length. 
Note that the material properties enter explicitly via the intrinsic impedance $Z=\sqrt{\mu / \eps}$ 
and the speed of light $\v = 1/\sqrt{\eps \mu}$.
Therefore, the Trefftz basis naturally adapts to local changes in the material.
%

\begin{lemma} \label{lem:fip}
Assume that either $p=0$ or that $p \ge 1$ and $\he_{p,i}$, $\hh_{p,i}$, $\hd_{p,i}$ are mutually orthogonal with $\hd_{p,i}=\he_{p,i} \times \hh_{p,i}$. 
Then any function of the form $\left(\E,\H\right)=\F_{p,i}$ is a vector valued polynomial of degree $p$ which satisfies Maxwell's equations \eqref{eq:maxwell} and the constraint conditions \eqref{eq:gauge}.
We call $\F_{p,i}$ a {\em polynomial plane wave}.
\end{lemma}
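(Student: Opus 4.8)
The plan is to verify directly that the ansatz $\F_{p,i}$ satisfies each of the four equations in the definition of the local Trefftz space, by computing the relevant space-time derivatives of $\varphi_{p,i}(\r,t)=(\hd\cdot\r-\v t)^p$ and exploiting the orthogonality hypothesis. First I would record the elementary derivative formulas: $\grad\varphi_{p,i}=p\,(\hd\cdot\r-\v t)^{p-1}\,\hd$ and $\dt\varphi_{p,i}=-p\,\v\,(\hd\cdot\r-\v t)^{p-1}$, both of which are polynomials of degree $p-1$ (so that $\F_{p,i}$ is indeed a vector-valued polynomial of degree $p$). For $p=0$ the function is constant, all derivatives vanish, and every equation holds trivially, so that case can be dispatched in one line; for the remainder assume $p\ge1$ and write $\psi:=p\,(\hd\cdot\r-\v t)^{p-1}$ for brevity so that $\grad\varphi=\psi\,\hd$ and $\dt\varphi=-\v\psi$.

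Next I would check the two curl equations. With $\E=\he\,\varphi$ and $\H=\Zinv\hh\,\varphi$, the product rule gives $\curl\E=\grad\varphi\times\he=\psi\,(\hd\times\he)$ and likewise $\curl\H=\Zinv\psi\,(\hd\times\hh)$. The orthonormal right-handed frame hypothesis $\hd=\he\times\hh$ (together with mutual orthogonality and unit length) yields the cyclic identities $\hd\times\he=\hh$ and $\hd\times\hh=-\he$, which are the crucial algebraic input. Substituting, $\mu\dt\H+\curl\E=\mu\Zinv(-\v\psi)\hh+\psi\hh=\psi\hh\,(1-\mu\v\Zinv)$, and since $\Zinv=\sqrt{\eps/\mu}$ and $\v=1/\sqrt{\eps\mu}$ one has $\mu\v\Zinv=1$, so this vanishes. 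Similarly $\eps\dt\E-\curl\H=\eps(-\v\psi)\he+\Zinv\psi\he=\psi\he\,(\Zinv-\eps\v)$, and $\eps\v=\sqrt{\eps/\mu}=\Zinv$, so this vanishes as well. The same computation covers the degenerate frame possibilities allowed when $p\ge1$ (e.g.\ if one of the vectors were zero the orthogonality-with-$\hd=\he\times\hh$ hypothesis would force the others to a consistent configuration), but I would simply invoke the stated hypothesis rather than enumerate cases.

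Finally the divergence constraints: $\div\E=\grad\varphi\cdot\he=\psi\,(\hd\cdot\he)=0$ and $\div\H=\Zinv\psi\,(\hd\cdot\hh)=0$, both immediate from $\hd\perp\he$ and $\hd\perp\hh$. Collecting the four verifications shows $\F_{p,i}\in\TT_p(\K\times\In)$, which is the assertion. I do not anticipate a genuine obstacle here — the proof is a short direct calculation — but the one point that deserves care is the bookkeeping of the material constants: one must use $Z=\sqrt{\mu/\eps}$ and $\v=1/\sqrt{\eps\mu}$ (hence $\eps\v=\mu\v\Zinv=Z^{-1}\cdot\text{something}$, more precisely $\eps\v=\Zinv$ and $\mu\v=Z$) to see the coefficients cancel, and one must correctly invoke the right-handed orthonormal frame identities $\hd\times\he=\hh$, $\hd\times\hh=-\he$. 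Everything else is routine product-rule differentiation.
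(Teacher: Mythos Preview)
Your proof is correct and follows essentially the same route as the paper: handle $p=0$ trivially, then for $p\ge1$ compute $\dt\varphi$, $\grad\varphi$, and reduce Maxwell's equations and the divergence constraints to the algebraic identities $\hd\times\he=\hh$, $\hd\times\hh=-\he$, $\hd\cdot\he=\hd\cdot\hh=0$ of the orthonormal right-handed frame. The only cosmetic difference is that you track the material constants $Z$ and $\v$ explicitly and verify $\mu\v\Zinv=1$, $\eps\v=\Zinv$, whereas the paper absorbs these cancellations silently into its algebraic conditions $-\he+\hh\times\hd=0$ and $-\hh-\he\times\hd=0$.
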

\begin{proof}
The case $p=0$ yields constant functions and the assertion is clear.
Now assume $p\ge 1$:
By definition, the electric field component has the form $\E = \he_{p,i} \left(\hd_{p,i} \cdot \r - \v t\right)^p$.
One can then verify by direct computation that
$\dt \E = -\he_{p,i} p c \left(\hd_{p,i} \cdot \r - \v t\right)^{p-1}$ and $\curl \E = \he_{p,i} \times \hd_{p,i} p \left(\hd_{p,i} \cdot \r - \v t\right)$;
similar expressions are obtained for the magnetic field component. 
Maxwell's equations \eqref{eq:maxwell} then reduce to the algebraic conditions
\begin{align*}
 p \left(\hd_{p,i} \cdot \r - \v t\right)^{p-1} \left(-\he_{p,i} + \hh_{p,i} \times \hd_{p,i}\right) &= 0 & \text{and}&&
p \left(\hd_{p,i} \cdot \r - \v t\right)^{p-1} \left(-\hh_{p,i} - \he_{p,i} \times \hd_{p,i}\right) &= 0.  
\end{align*}
The two equations are satisfied if $\he_{p,i} =  \hh_{p,i} \times \hd_{p,i}$ 
and that $\hh_{p,i} = - \he_{p,i} \times \hd_{p,i}$ which are the assumptions of the Lemma.  
Additionally, we have $\div \E = p \left(\hd_{p,i} \cdot \r - \v t\right)^{p-1} \he_{p,i} \cdot \hd_{p,i}$. Therefore,
also the constraint conditions are satisfied if 
the directions $\he_{p,i}$, $\hh_{p,i}$, $\hd_{p,i}$ are orthogonal. 
\end{proof}

\subsection{The polynomial Trefftz space}

We will now utilize the polynomial plane wave functions introduced in the previous section to define a special basis for the space $\TT_p\left(\K \times \In\right)$
of local Trefftz polynomials. 
\begin{lemma} \label{lem:trefftz3d}
Let $\mu,\eps>0$ be constant on $\K \times \In$.
Then
\begin{enumerate}
 \item $\dim \TT_p\left(\K \times \In\right)=\frac{1}{3}\left(p+1\right)\left(p+2\right)\left(2p+9\right)$.
 \item For $p \ge 0$ there exist $i_p=2\left(p+1\right)\left(p+3\right)$ orthogonal vector triples $\left(\hd_{p,i},\he_{p,i},\hh_{p,i}\right)$, $i=1,\ldots,i_p$ 
of unit length with $\hh_{p,i} = \hd_{p,i} \times \he_{p,i}$ and such that the functions $\F_{p,i}$, $i=1,\ldots,i_p$ are linearly independent.
 \item The functions $\F_{i,k}$, $1 \le i \le i_k$, $0 \le k \le p$, form a basis of $\TT_p\left(\K \times \In\right)$.
\end{enumerate}
\end{lemma}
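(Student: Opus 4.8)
The plan is to prove parts (1)–(3) together, using a dimension count to reduce everything to a linear-independence statement about the polynomial plane waves $\F_{i,k}$. First I would establish part (1) by a direct computation: the space $\TT_p(\K\times\In)$ consists of those $(\E,\H)\in[\PP_p(\K\times\In)]^6$ satisfying the four first-order constraints in \eqref{eq:localtrefftz}. One writes $\E$ and $\H$ as general polynomials of degree $\le p$ in the four variables $(\r,t)$, imposes $\eps\dt\E-\curl\H=0$, $\mu\dt\H+\curl\E=0$, $\div\E=0$, $\div\H=0$, and counts the remaining free coefficients. The cleanest route is to recognize that a Trefftz polynomial is determined by its Cauchy data at $t=\ut$ together with the divergence constraints: given divergence-free polynomial fields $\E(\cdot,\ut),\H(\cdot,\ut)$ of degree $\le p$ in space, the Maxwell system propagates them uniquely as polynomials in time, but the time-degree bound $\le p$ must be checked — in fact the plane-wave ansatz shows it holds. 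Counting divergence-free vector polynomials of degree $\le k$ in $3$ space variables and summing over $k\le p$ should yield $\tfrac13(p+1)(p+2)(2p+9)$; this is the bookkeeping step and I would just carry it out.

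Next, for part (2), I would fix the homogeneous degree $k$ and count: the functions $\F_{k,i}$ with $\varphi_{k,i}=(\hd_{k,i}\cdot\r-\v t)^k$ are homogeneous of degree $k$ in $(\r,t)$, and by Lemma~\ref{lem:fip} each lies in $\TT_p(\K\times\In)$ for $p\ge k$. The subspace of $\TT_k$ spanned by homogeneous-degree-$k$ Trefftz polynomials has dimension $\dim\TT_k-\dim\TT_{k-1}$, which from part (1) equals $2(k+1)(k+3)$ — matching the claimed number $i_k$. So the task is to exhibit $i_k$ orthogonal triples $(\hd_{k,i},\he_{k,i},\hh_{k,i})$ whose associated plane waves are linearly independent and hence span this homogeneous subspace. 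The natural construction is to pick finitely many directions $\hd$ (more than $k$ distinct ones along a line suffice to make the scalar powers $(\hd\cdot\r-\v t)^k$ independent, but one needs enough directions spread out in space), and for each $\hd$ two independent polarizations $\he$ in the plane orthogonal to $\hd$, with $\hh=\hd\times\he$. Independence of the resulting vector-valued functions follows because distinct directions give scalar factors that are linearly independent as polynomials, and the two polarizations per direction are independent vectors; a Vandermonde-type argument or evaluation at suitably chosen points makes this rigorous.

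Part (3) is then immediate: by parts (1) and (2), the functions $\F_{i,k}$ with $1\le i\le i_k$, $0\le k\le p$ are $\sum_{k=0}^p i_k = \dim\TT_p(\K\times\In)$ in number, they all lie in $\TT_p(\K\times\In)$ by Lemma~\ref{lem:fip}, and they are linearly independent — independence across different degrees $k$ follows by separating homogeneous components, and within a fixed degree it is part (2). A spanning set of the right cardinality that is linearly independent is a basis.

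\medskip

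The main obstacle is the linear-independence claim in part (2), i.e., choosing the directions and polarizations so that exactly $i_k=2(k+1)(k+3)$ of the $\F_{k,i}$ are independent and span the full homogeneous-degree-$k$ Trefftz subspace. Proving that some concrete finite family of directions achieves this (rather than merely that generically independent plane waves exist) requires a careful argument — either an explicit evaluation/Vandermonde computation, or a duality/annihilator argument showing no nonzero linear functional on the homogeneous Trefftz space kills all the chosen plane waves. Everything else (the two dimension counts and the assembly in part (3)) is routine once this is in hand.
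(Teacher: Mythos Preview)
Your approach to parts (1) and (3) matches the paper's in spirit: both are dimension counts. For (1), the paper counts constraints directly --- $[\dim\PP_p]^6=(p+1)(p+2)(p+3)(p+4)/4$, minus $p(p+1)(p+2)(p+3)/4$ from the six Maxwell equations, minus $p(p+1)(p+2)/3$ from the two divergence conditions imposed only at one time slice --- and then defers the independence of these constraints to another paper. Your Cauchy-data route (divergence-free spatial initial data propagated forward, checking the total degree stays $\le p$) is a legitimate and arguably cleaner alternative, but you should actually carry out the count rather than say ``should yield''. Part (3) is handled identically in both: homogeneity separates degrees, then count.

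The important thing you need to know is that the paper \emph{does not prove part (2)}. Immediately after the proof (which is explicitly labeled ``Proof of assertions 1.\ and 3.''), the authors insert a remark stating that they cannot provide a complete proof of assertion 2, and that the existence of $i_p$ linearly independent plane waves is instead verified numerically for each $p$ needed in practice (they report doing this for $p\le 5$ with the explicit direction choice given later). So the obstacle you correctly identified --- turning the Vandermonde/evaluation heuristic into an actual proof that a specific finite set of directions and polarizations gives $2(k+1)(k+3)$ independent functions --- is one the authors did not resolve. Your sketch of a possible argument (distinct directions give independent scalar factors, two polarizations per direction) is plausible as a starting point, but be aware that distinct directions alone do not automatically give independent degree-$k$ powers $(\hd\cdot\r-\v t)^k$ in the required number, and the paper offers no help here beyond numerical verification.
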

\begin{proof}[Proof of assertions 1. and 3.]
The first assertion follows from the explicit construction of a basis for the space of divergence free Trefftz polynomials in \cite{sisc_2014}. Note that for polynomials of four variables we have $[\dim \PP_p]^6=\left(p+1\right)\left(p+2\right)\left(p+3\right)\left(p+4\right)/4$. 
The two Maxwell equations give $p \left(p+1\right) \left(p+2\right) \left(p+3\right)/4$ independent conditions. 
Applying the divergence operator to Maxwell's equations yields $\eps \dt \div \E = -\div \curl \H = 0$ and $\mu \dt \div \H = \div \curl \E=0$. The two constraint conditions thus only have to be required at one point in time 
and therefore give additional $p \left(p+1\right) \left(p+2\right)/3$ independent conditions. 
The fact that the two sets of conditions are independent can be seen from the construction in \cite{sisc_2014}, 
and the assertion follows by counting arguments.
%
%
Trefftz polynomials $\F_{p,i}$ with different orders are linearly independent. 
The third assertion then follows from 1. and 2. by counting the dimensions.
\end{proof}
\begin{remark} \rm 
We cannot provide a complete proof for assertion 2. of Lemma~\ref{lem:trefftz3d} yet.
The fact that there exist $i_p$ such linear independent functions can however be verified numerically for any $p$ required in practice. 
An explicit construction, for which we verified the assertion for $p \le 5$, is given in Section~\ref{sec:directions} below.
Note that by construction and Lemma~\ref{lem:fip}, we know that the polynomials $\F_{i,p}$ have order $p$ and are members of $\TT^p$. Moreover, by assertion 1. of the previous lemma, there cannot exist more than $i_p$ linear independent Trefftz polynomials of order $p$. 
\end{remark}

\begin{remark} \rm
According to Lemma~\ref{lem:trefftz3d}, every local Trefftz polynomial can be split into a superposition of 
polynomial plane wave functions $\F_{p,i}$. This is the basic requirement for the 
implementation of the boundary condition \eqref{eq:abc}. If we use the polynomial plane wave basis in the 
implementation of the method, then the required decomposition is readily available.
Let us emphasize that the Trefftz polynomials have coupled electric and magnetic field components, they are functions of space and time, but do not have a tensor-product structure.
\end{remark}

\subsection{The two-dimensional setting} \label{sec:2d}

In cases of translational invariance along one direction, Maxwell's equations can be cast in a quasi two-dimen\-sional form. 
For illustration and later reference, let us consider one such case in more detail:
We assume that the domain and all fields are homogeneous in the $z$-direction and that the 
electric field is polarized in this direction. The electromagnetic fields then have the form
$\H=\left(\Hup_1,\Hup_2,0\right)$ and $\E = \left(0,0,\Eup\right)$ with $\Hup_1$, $\Hup_2$, and $\Eup$ only depending on $x$ and $y$. This is known as the TM mode. The computational domain is $\Omega=\Omega' \times R$ with $\Omega'\subset \RR^2$ being the relevant slice of the three dimensional domain at any fixed $z$ and $R \subset \RR$ some interval. 
According to the symmetry assumption on the fields, we require $\n \times \H=0$ at $\Omega' \times \partial R$.
Note that this setup still describes a truly three-dimensional problem with symmetry in the $z$ direction. 
We denote by $\Oh'=\{\K'\}$ a mesh of the two-dimensional domain $\Omega'$, set $\Oh=\{\K = \K' \times R\}$,
and define 
\begin{align} \label{eq:trefftzTE}
\TT_p'\left(\K \times \In\right)&= \big{\{}\left(\E,\H\right)  \in \TT_p\left(\K \times \In\right) : \E=\left(0,0,\Eup\right), \ \H=\left(\Hup_1,\Hup_2,0\right), \ \text{with} \ \Hup_1,\Hup_2,\Eup \quad \text{independent of } z \big{\}}.
\end{align}
The prime is used here to distinguish this formulation from a fully three-dimensional problem.
The construction of a basis for the polynomial Trefftz space is similar to the general case. 
Here we consider functions of the form 
\begin{align}
\F'_{p,i}\left(\r,t\right) = \left(\begin{array}{c} \he'_{p,i} \\ \Zinv \hh'_{p,i} \end{array}\right) \varphi'_{p,i}\left(\r,t\right)
\qquad \text{with} \qquad \varphi'_{p,i}\left(\r,t\right) = \left(\hd'_{p,t} \cdot \r - \v t\right)^p 
\end{align}
where $\he'_{p,i}=\left(0,0,1\right)$ and $\hd'_{p,i}$, $\hh'_{p,i}$ are orthogonal unit vectors in the  $x$-$y$ plane. Note that under these assumptions the vector $\hh'_{p,i}$ is already fixed by the choice of $\hd'_{p,i}$ and the condition $\hd'_{p,i}=\he'_{p,i} \times \hh'_{p,i}$. 
\begin{lemma} \label{lem:trefftz2d}
Let $\mu,\eps>0$ be constant on $\K \times \In$.  Then
\begin{enumerate}
 \item $\dim \TT'_p\left(\K \times \In\right)=\left(p+1\right)\left(p+3\right)$.
 \item For every $p$ there exist $i_p'=2p+3$ orthogonal vector triples $\left(\hd_{p,i}',\he_{p,i}',\hh_{p,i}'\right)$, $i=1,\ldots,i_p'$ 
 consisting of mutually orthogonal vectors with $\he_{p,i}'=\left(0,0,1\right)$ and $\hh_{p,i}'=\hd_{p,i}' \times \he_{p,i}'$. 
 such that the system of functions $\F_{p,i}'$, $i=1,\ldots,i_p'$ are linearly independent.
 \item The functions $\F'_{i,k}$, $1 \le i \le i_k'$, $0 \le k \le p$ form a basis of $\TT_p'\left(\K \times \In\right)$.
\end{enumerate} 
\end{lemma}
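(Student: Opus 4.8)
The plan is to treat this as the two-dimensional specialization of Lemma~\ref{lem:trefftz3d}, following exactly the same three-step structure (dimension count, existence of enough independent plane waves, basis by dimension comparison), but with the simplifications afforded by the fixed polarization $\he'_{p,i}=(0,0,1)$. For assertion~1, I would count dimensions directly in the reduced setting. A polynomial $(\E,\H)\in\TT'_p$ is determined by three scalar polynomials $\Eup,\Hup_1,\Hup_2$ in the two variables $x,y$ and the time variable $t$, so the ambient space is $[\PP_p(\K\times\In)]^3$ in three variables, of dimension $3\binom{p+3}{3}=\tfrac12(p+1)(p+2)(p+3)$. Maxwell's equations \eqref{eq:maxwell} reduce in the TM setting to the scalar system $\mu\dt\Hup_1 = -\partial_y\Eup$, $\mu\dt\Hup_2 = \partial_x\Eup$, $\eps\dt\Eup = \partial_x\Hup_2 - \partial_y\Hup_1$; the two $\H$-equations each impose $\binom{p+2}{3}$ conditions (the right-hand side has degree $p-1$), and one checks that together with the remaining constraint (only $\div\H=0$ is nontrivial, required at a single time by the same argument as in Lemma~\ref{lem:trefftz3d}) the conditions are independent, leaving $\tfrac12(p+1)(p+2)(p+3) - 2\binom{p+2}{3} - (\text{constraint count})$. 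I would verify that this bookkeeping collapses to $(p+1)(p+3)$; the cleanest route is to observe that specifying $\Eup$ freely (dimension $\binom{p+3}{3}$) together with appropriate "integration constants" in $t$ for $\Hup_1,\Hup_2$ determines the solution, or simply to cite the explicit divergence-free Trefftz basis construction of \cite{sisc_2014} restricted to the TM subspace, exactly as assertion~1 of Lemma~\ref{lem:trefftz3d} was proved.

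For assertion~3, once assertions~1 and~2 are in hand the argument is purely a counting one, identical to the 3D case: each $\F'_{p,i}$ is by construction a member of $\TT'_p(\K\times\In)$ of homogeneous degree $p$ in the argument $\hd'_{p,i}\cdot\r - \v t$, Trefftz polynomials of distinct orders are linearly independent (the top-degree part of each is nonzero and of the stated degree), so the system $\{\F'_{i,k}: 1\le i\le i'_k,\ 0\le k\le p\}$ is linearly independent, and its cardinality is $\sum_{k=0}^p (2k+3) = p^2+4p+3 = (p+1)(p+3) = \dim\TT'_p(\K\times\In)$, whence it is a basis.

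The main obstacle is assertion~2 — the existence, for each $p$, of $i'_p = 2p+3$ propagation directions $\hd'_{p,i}$ in the $x$-$y$ plane making the functions $\F'_{p,i}$ linearly independent. Here the situation is actually better than in 3D: since $\he'_{p,i}=(0,0,1)$ is fixed and $\hh'_{p,i}$ is then forced, each $\F'_{p,i}$ is genuinely a single scalar plane wave $(\hd'_{p,i}\cdot\r - \v t)^p$ times a fixed-up-to-direction vector, so after parametrizing $\hd'_{p,i} = (\cos\theta_i,\sin\theta_i,0)$ the homogeneous polynomial $(\r\cdot\hd' - ct)^p$ expands in the monomials of degree $p$ in $(x,y,t)$ with coefficients that are the degree-$p$ "trigonometric" expressions in $\theta_i$. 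Linear independence of $2p+3$ such functions is a Vandermonde-type / Chebyshev-system statement: one shows that the $(2p+3)$-dimensional span of $\{(\r\cdot\hd-ct)^p : \hd \in S^1\}$ has a basis realized by $2p+3$ generic choices of $\theta$, e.g.\ equispaced angles, and that the resulting coefficient matrix is nonsingular. As in Lemma~\ref{lem:trefftz3d}, I expect a fully general proof to require either an explicit evaluation of this determinant or an inductive/generating-function argument; following the paper's own practice (see the remark after Lemma~\ref{lem:trefftz3d} and the explicit construction promised in Section~\ref{sec:directions}), I would give the explicit choice of directions, verify nonsingularity of the associated matrix for the $p$ of interest, and defer the general claim to that construction.
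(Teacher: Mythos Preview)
Your proposal is correct and follows essentially the same approach as the paper: the paper's proof of Lemma~\ref{lem:trefftz2d} is in fact just the single sentence ``The proof follows by counting arguments as in the three dimensional case,'' deferring assertion~2 to the explicit construction in Section~\ref{sec:directions} and to numerical verification, exactly as you anticipate. Your write-up is considerably more detailed than the paper's own (in particular you carry out the sum $\sum_{k=0}^p(2k+3)=(p+1)(p+3)$ and sketch the TM dimension count explicitly), but the logical structure---dimension count via \cite{sisc_2014}, assertion~2 left to explicit/numerical verification, assertion~3 by comparing cardinalities---is identical.
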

The proof follows by counting arguments as in the three dimensional case. 
A particular choice of directions for the two-dimensional setting will again be given in Section~\ref{sec:directions}. 

\begin{remark} \rm
It suffices to consider the field components $\Hup_1$, $\Hup_2$, and $\Eup$ as functions of $x$, $y$, and $t$ only. One could therefore also utilize the alternative representation
\begin{align} \label{eq:trefftz2d}
\widehat{\TT}_p'\left(\K' \times \In\right) = \{\left(\H',\Eup\right) \in [\PP_p\left(\K' \times \In\right)]^3 : \eps \dt \Eup - \curl \H' = 0, \ \mu \dt \H' + \curl \Eup = 0, \ \div \H'=0 \}. 
\end{align}
The symbol $\curl$ here denotes the vector-to-scalar and scalar-to-vector curl, respectively. 
Note that the constraint condition for $\Eup$ is satisfied automatically since $\Eup$ only depends on $x$ and $y$, and the corresponding field $\E=\left(0,0,\Eup\right)$ points into $z$-direction. 
The space $\widehat{\TT}_p'$ is isomorphic with $\TT'_p$ and the results stated in the previous lemma carry over.
\end{remark}

\subsection{Choice of directions} \label{sec:directions}

To complete the description of the construction of our basis, 
we have to find a proper set of independent directions $\left(\hd_{p,i},\he_{p,i},\hh_{p,i}\right)$. 
Let us discuss now in some detail a particular choice that 
we used to define the polynomial plane wave basis in our numerical experiments. 

\paragraph{Three dimensional setting}
%
For $p=0$ 
we choose six independent constant functions, one for each vector component.
For order $p\ge 1$, we proceed as follows:
\begin{enumerate}
\item We choose $p+1$ distinct numbers $z_m$, $m=0,\ldots,p$, well distributed in the interval $\left(-1,1\right)$, such that 
\begin{align*}
\ldots,z_{p-4} < z_{p-2} < z_p< z_{p-1} < z_{p-3} < \ldots. 
\end{align*}
This ordering results in an adequate distribution of the directions, cf. \cite{Moiola:2011io} and see Figure~\ref{fig:direction-orientation} below.
\item For every $z_m$, we choose $2m+3$ equidistantly spaced 
points on the circle $\{\left(x,y,z_m\right) : x^2+y^2+z_m^2=1\}$. This yields in total $\sum_{m=0}^p \left(2m+3\right) = \left(p+1\right)\left(p+3\right)$ 
different directions  $\hd_{p,2j-1}$, $j=1,\ldots,\left(p+1\right)\left(p+3\right)$. 
\item For every direction $\hd_{p,2j-1}$ we set $\hd_{p,2j}=\hd_{p,2j-1}$ and choose two independent, e.g., mutually orthogonal, 
polarizations $\he_{p,2j-1}$, $\he_{p,2j}$ orthogonal to $\hd_{p,2j-1}$. 
\item For any pair $\left(\hd_{p,i},\he_{p,i}\right)$, $i=1,\ldots,2\left(p+1\right)\left(p+3\right)$ we finally define $\hh_{p,i} = \he_{p,i} \times \hd_{p,i}$. 
\end{enumerate}
A possible choice of the directions $\hd_{p,2j-1}$, $j=1,\ldots,\left(p+1\right)\left(p+3\right)$ is depicted in Figure~\ref{fig:direction-orientation}. Note that the linear independence of the corresponding functions $\F_{p,i}$ 
can always be verified numerically. 
A similar construction of directions has been used in~\cite{moiolaphd} to generate a basis for the time harmonic 
problem. The analysis of~\cite{moiolaphd} shows that even (almost) any random choice of directions $\hd_{p,2j-1}$ will yield a linearly independent system of Trefftz functions.

\begin{figure}[!ht]
\centering
\includegraphics[width=0.9\textwidth]{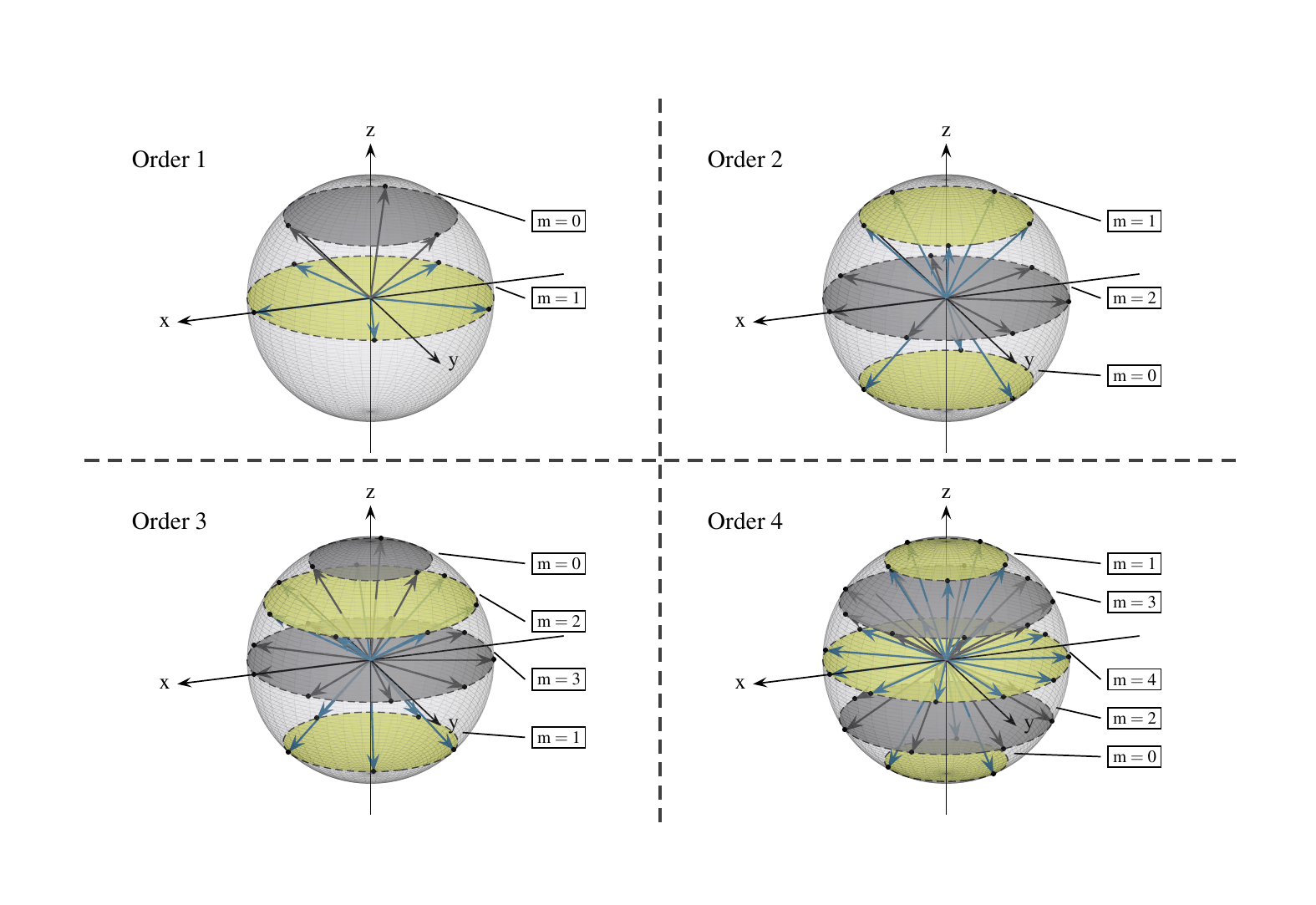}
 \caption[fig:directions]{Choice of directions $\hd_{p,i}$ for $p=1,2,3,4$ in the three-dimensional case. 
   The planes corresponding to $z_m$, $m=0,\ldots,p$ are highlighted in different colors. 
Due to the good distribution and ordering of the levels $z_m$, the directions are distributed 
well over the whole sphere.
 \label{fig:direction-orientation}}
\end{figure}

\paragraph{Two dimensional setting}

For the setting described in Section~\ref{sec:2d}, the construction of a suitable set of directions is much simpler. We choose three constant functions for $p=0$, and proceed for $p>0$ as follows: 
\begin{enumerate}
\item We choose equidistantly spaced directions $\hd_{p,i}$, $i=1,\ldots,2p+3$ on the unit circle $\{\left(x,y,0\right) : x^2+y^2=1\}$. 
\item We define $\he_{p,i}=\left(0,0,1\right)$ and $\hh_{p,i} = \hd_{p,i} \times \he_{p,i}$. 
\end{enumerate}
Linear of the corresponding plane wave functions $\F'_{p,i}$ can again easily be verified numerically.

\section{Incorporation of the boundary conditions} \label{sec:boundary}

To complete the definition of the discontinuous Galerkin method, we
now demonstrate how to incorporate various types of boundary conditions. 
We start by discussing two different implementations for the impedance boundary condition~\eqref{eq:impedance},
which allow us to treat the perfect-electric-conducting (PEC) and perfect-magnetic-conducting (PMC), as well as the first order absorbing Silver-Müller (SM) boundary condition \eqref{eq:silvermueller}. 
Our implementation of the transparent boundary condition~\eqref{eq:abc} will turn out to have a very similar structure. 
In addition to the formulation of these conditions, we also comment on their stability.

\subsection{Representation of PEC-like boundary conditions} \label{sec:pec}

Let us first consider the impedance boundary condition of the form 
\begin{align} \label{eq:pec}
\n \times \E - \beta  (\n \times \H) \times \n = \n \times \g.
\end{align}
For $\beta=0$ and $\g=0$, we arrive at the condition for a perfect electric conductor,
which is why we call conditions of this form PEC-like. 
The choice $\beta=\Z$ and $\g=0$ corresponds to the first-order absorbing boundary condition \eqref{eq:silvermueller}. 
To incorporate conditions of the form \eqref{eq:pec} in Method~\ref{meth:stdg}, 
we choose 
\begin{align*}
b(\n \times \E,\n \times \H;\ve,\vh) = -(\n \times \E) \cdot \vh + \beta (\n \times \H) \cdot (\n \times \vh)
\quad \text{and} \quad 
r(\n \times \g;\ve,\vh) = -(\n \times \g) \cdot \vh.
\end{align*}
This form is consistent with the boundary condition~\eqref{eq:pec}, i.e., $b(\n \times \E,\n \times \H;\ve,\vh)=r(\n \times \g;\ve,\vh)$ holds if \eqref{eq:pec} is valid. 
Let us now consider the energy balance in (iv) of Section~\ref{sec:properties}: 
Testing with $\left(\ve,\vh\right)=\left(\E,\H\right)$ yields 
\begin{align*}
r(\n \times \g;\E,\H) -b(\n \times \E,\n \times \H;\E,\H)  - \n \times \E \cdot \H = 
-(\n \times \g) \cdot \H - \beta  |\n \times \H|^2.
\end{align*}
For $\beta \ge 0$, the last term yields a negative contribution to the energy identity (iv), 
and the resulting method is stable without restriction on the size of the time step.
For $\beta \ge 0$ and $\g = 0$, we thus obtain energy decay on the discrete level.
%


\subsection{Representation of PMC-like boundary conditions} \label{sec:pmc}

Taking the cross product with $\n$ from the right in equation \eqref{eq:pec}, it is possible to obtain an alternative equivalent form of the impedance boundary condition, namely 
\begin{align} \label{eq:pmc}
\n \times \H + \beta' (\n \times \E ) \times \n = \n \times \g'. 
\end{align}
For $\beta'=0$ and $\g'=0$, we arrive at the perfect-magnetic-conducting condition. 
The choice $\beta'=\Zinv$ and $\g'=0$ yields an equivalent form of the first-order absorbing boundary condition \eqref{eq:silvermueller}. 
The condition \eqref{eq:pmc} can be incorporated consistently in the discontinuous Galerkin Trefftz 
method by choosing 
\begin{align*}
b(\n \times \E,\n \times \H; \ve,\vh) = (\n \times \H) \cdot \ve + \beta' (\n \times \E) \cdot (\n \times \ve) 
\quad \text{and} \quad 
r(\n \times \g';\ve,\vh) = (\n \times \g') \cdot \ve.
\end{align*}
Testing with $\left(\ve,\vh\right)=\left(\E,\H\right)$, the boundary term in the energy identity (iv) of Section~\ref{sec:properties} now gives 
\begin{align*}
r(\n \times \g';\E,\H) - b(\n \times \E,\n \times \H; \E,\H) - \n \times \E \cdot \H 
= (\n \times \g') \cdot \E - \beta' |\n \times \E|^2.
\end{align*}
For any $\beta' \ge 0$ and $\g' = 0$, 
we get a negative contribution in the energy identity and thus a dissipative boundary condition.
The resulting discrete variational system is then well-posed without restriction on the size of
time step.
%
\subsection{A first order absorbing boundary condition}
A linear combination of the two conditions \eqref{eq:pec} and \eqref{eq:pmc} 
with $\beta=\Z$, $\beta'=\Zinv$, and $\g=\g'=0$ yields 
\begin{align*}
b(\n \times \E,\n \times \H; \ve,\vh) 
= \frac{1}{2} \Big( (\n \times \H) \cdot \ve + \Zinv  (\n \times \E) \cdot (\n \times \ve) 
-(\n \times \E) \cdot \vh + \Z \; (\n \times \H) \cdot (\n \times \vh) \Big)
\end{align*}
and $r(\n \times \g';\ve,\vh) = 0$,
which serves as implementation of the Silver-Müller boundary conditions \eqref{eq:silvermueller} in our tests.

\subsection{New transparent boundary conditions} \label{sec:tbc}

The basic idea behind our proposal for a transparent boundary condition is
to locally expand the electromagnetic field $\left(\E,\H\right)$ into a superposition of plane waves
\begin{align} \label{eq:split} 
\left(\E,\H\right) = \sum\nolimits_{j} c_j \left(\e_j,\h_j\right),
\end{align}
and then suppress the incoming parts by appropriate penalization. 
Since we are using a basis consisting of plane waves $\F_j = \left(\e_j,\h_j\right)$,
such a decomposition of the discretized fields is readily available. 
For the approximation of the transparent boundary condition \eqref{eq:abc} 
within our discontinuous Galerkin framework, we then consider the choice 
\begin{align}  \label{eq:tbc} 
b(\n \times \E,\n \times \H;\ve\vh) 
&=  \frac{1}{2} \Big( \n \times \H_{in} \cdot \ve_{in} - \n \times \E_{in} \cdot \vh_{in}  + \Zinv (\n \times \E_{in})  \cdot (\n \times \ve_{in}) + \Z \; (\n \times \H_{in}) \cdot (\n \times \vh_{in}) \Big)
\end{align}
and we set $r(\n \times \g;\ve,\vh) = 0$.
Here $\left(\E_{in},\H_{in}\right) = \sum_{j'} c_{j'} \left(\e_{j'},\h_{j'}\right)$ denotes the incoming part of the electromagnetic fields, i.e., summation is done only over indices $j'$ with $\hd_{j'} \cdot \n < 0$.
This condition has a similar form as the Silver-Müller condition stated above,
but only the incoming fields are taken into account. 
%
%
%
%
Let us also examine the energy balance for the new boundary condition: 
Testing with $\left(\ve,\vh\right)=\left(\E,\H\right)$, and assuming $Z=1$ for simplicity, we obtain
\begin{align*}
\n \times \E \cdot \H + b(\n \times \E,\n \times \H; \E,\H)
& = \frac{1}{2} \Big(
   \n \times \E_{out} \cdot \H_{out} - \n \times \H_{out} \cdot \E_{out} 
 + \n \times \E_{out} \cdot \H_{in} - \n \times \H_{out} \cdot \E_{in} 
\\& \qquad \qquad  
 + \n \times \E_{in} \cdot \H_{out} - \n \times \H_{in} \cdot \E_{out}\big) 
 + |\n \times \E_{in}|^2 + |\n \times \H_{in}|^2 \Big).
\end{align*}
Here, $\E_{out}=\E-\E_{in}$ and $\H_{out}=\H-\H_{in}$ denote the out-going field components.
Similar as on the continuous level, we may argue that
the first two terms in the second line will give a positive contribution, if the numerical solution 
is mainly directed into an outward direction. This can be expected to be the case, 
if the continuous solution has this behavior. 
The third and fourth term can then be absorbed into the first two and the last term via a Young's inequality.
In summary, we thus expect a decay of the discrete energy, which is what we actually observe in our numerical tests.

\section{Test problems and numerical results}\label{sec:nr}
 
For numerical validation of the new transparent boundary condition,
we consider two test problems. 
The first test problem studies the propagation of a plane wave. 
In this scenario an analytic solution is available, 
which allows us to conduct a numerical convergence study. 
In the second test problem, we consider the propagation of a cylindrical wave.
We evaluate the effect of the transparent boundary condition on the dissipation 
of energy by comparing the numerical solutions obtained on 
a large domain and on an artificially truncated domain with different choices 
of boundary conditions. 
%

\subsection{Transmission of a Plane Wave}\label{sec:nr:wave}
%
%
We consider a plane wave propagating in direction $\k=(-1,-1,0)/\sqrt{2}$ 
through a homogeneous medium with parameters $\eps=\mu=1$.
The fields $\E=(0,0,\Eup)$ and $\H=(\Hup_1,\Hup_2,0)$ with 
\begin{align}\label{eq:wave}
\Eup= \exp \left( -\left( k_1 x+k_2 y - t + 8 \right)^2 /4 \right), \qquad \Hup_1=k_2 E, \quad \mathrm{and} \quad \Hup_2=-k_1 E,
\end{align}
satisfy Maxwell's equations \eqref{eq:maxwell} and the constraint conditions \eqref{eq:gauge},
and they will serve as the reference solution.
The evolution of the $\E_3=\Eup$ field component of the analytic solution over time is depicted in~\figref{fig:wave}.

\begin{figure}[ht!]
\centering
\includegraphics[width=0.8\textwidth]{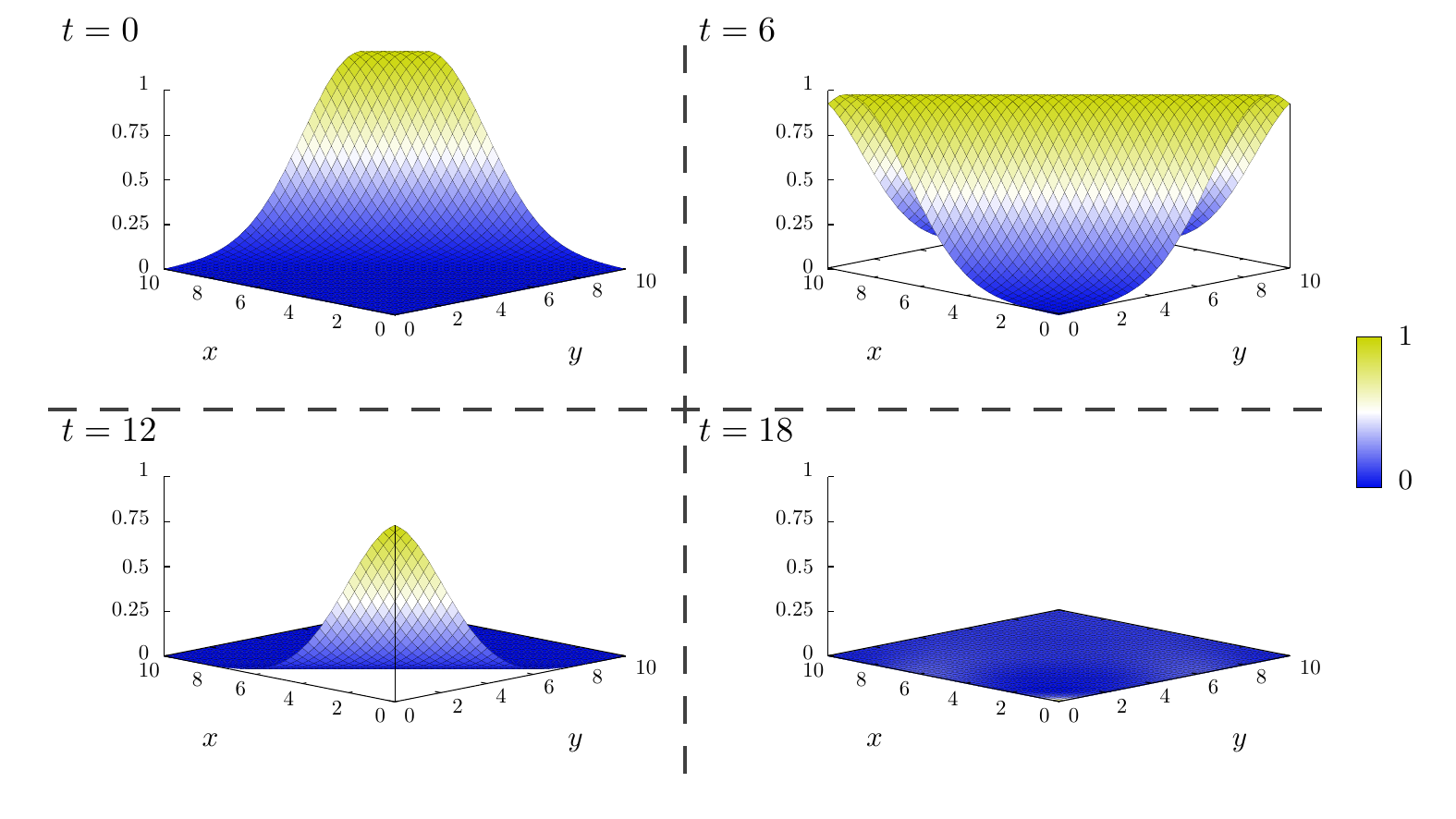}
 \caption[fig:vacuum]{Evolution of the electric field component $\Eup$ of the plane wave 
  \eqref{eq:wave} propagating through the computational domain $\Omega'=(0,10) \times (0,10)$.\\
} \label{fig:wave}
\end{figure}
%
%
Since the fields $\E$ and $\H$ are independent of the third coordinate direction, it suffices to consider 
a geometrically two-dimensional setting; see Section~\ref{sec:2d}.
As a computational domain, we choose $\Omega'=(0,10) \times (0,10)$.
On the incoming boundaries at $(x,10)$ and $(10,y)$, the fields are 
set to that of the analytic solution by the PEC-like boundary condition 
\eqref{eq:pec} with $\beta=1$ and $\g=\E$ determined from the analytic solution.
Different kinds of boundary conditions are utilized at the
boundaries $(x,0)$ and $(0,y)$, where the wave leaves the domain. 
%

%
%
For our simulations, we start from a uniform initial mesh $\Omega_h'$ 
with a mesh size $h=1$ resulting in $N=100$ rectangular elements.
The size of the time step is chosen as $\tau=h/2$ throughout our tests.
We employ Method~\ref{meth:stdg} with approximation spaces $\TT_p'(\Omega_h')$
and different choices of $p$.  
According to our considerations in Section~\ref{sec:2d}, the total number of 
degrees of freedom for one time step is then $N  (p+3)(p+1)$.
Simulations are carried out until $T=24$, where the wave should have left 
the domain almost completely.

In a first series of tests, we evaluate the order of convergence 
with respect to refinement of the spatial and temporal mesh size. 
We run simulations for different approximation orders $p$ and 
on sequences of uniformly refined meshes. In all tests, $\tau=h/2$
is utilized as the time step.
In~\figref{fig:dr_error} we display the relative
error of the computed approximations 
in the $\Lebesgue_2$ space-time norm as a function of the mesh size $h$. 
\begin{figure}[!ht]
\centering
\includegraphics[width=0.65\textwidth]{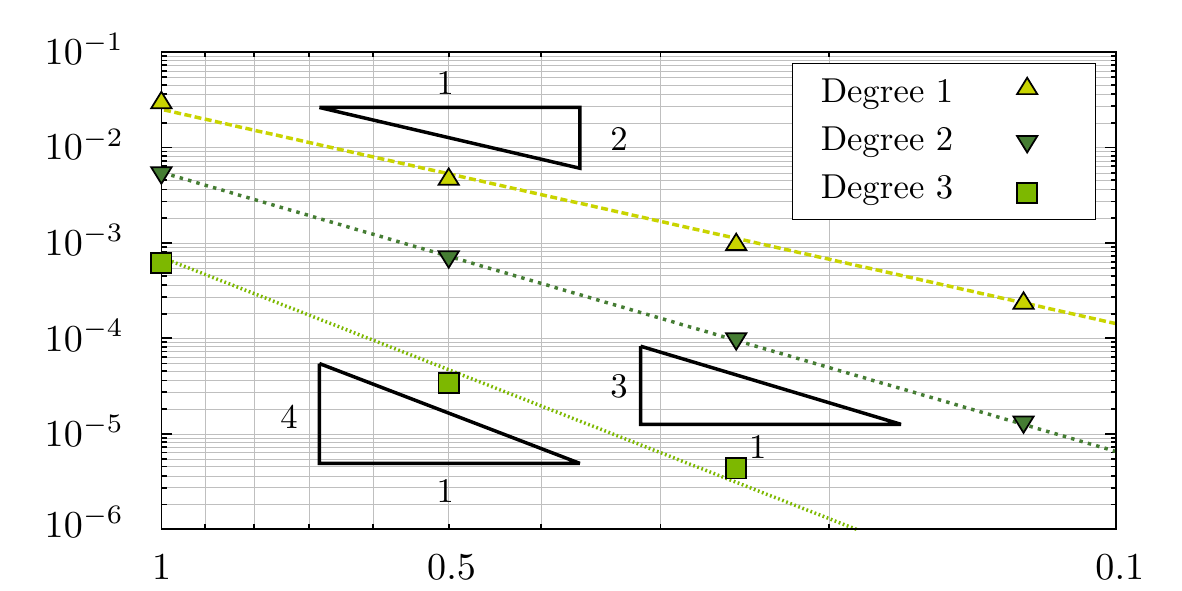}
\caption[fig:dgvsgdt-error]{Relative error $\|\Eup - \Eup_h\|_{L^2(\Omega' \times(0,T))} / \|\Eup\|_{L^2(\Omega' \times(0,T))}$ as a function of the spatial mesh-size $h$ for orders $p=1,2,3$. \\
} \label{fig:dr_error}
\end{figure}
In all simulations, we observe convergence rates of order $p+1$ 
which are optimal with respect to the approximation properties of a piecewise 
polynomial space of order $p$. The discontinuous Galerkin Trefftz method thus
yields a quasi optimal approximation. 

To evaluate more closely the effect of the transparent boundary condition, 
we display in~\figref{fig:order_error} the errors obtained for different choices 
of boundary conditions.
In particular, we compare the implementation of the Silver-Müller condition 
given at the end of Section~\ref{sec:pmc} with the transparent boundary condition discussed in Section~\ref{sec:tbc}. 
Simulations are carried out with polynomial approximation orders
$p=1,2,3$, and $4$ on a uniform mesh with mesh-size $h=1$, $\tau=1/2$, and $N=100$ elements.
\begin{figure}[!ht]
\centering
\includegraphics[width=0.45\textwidth]{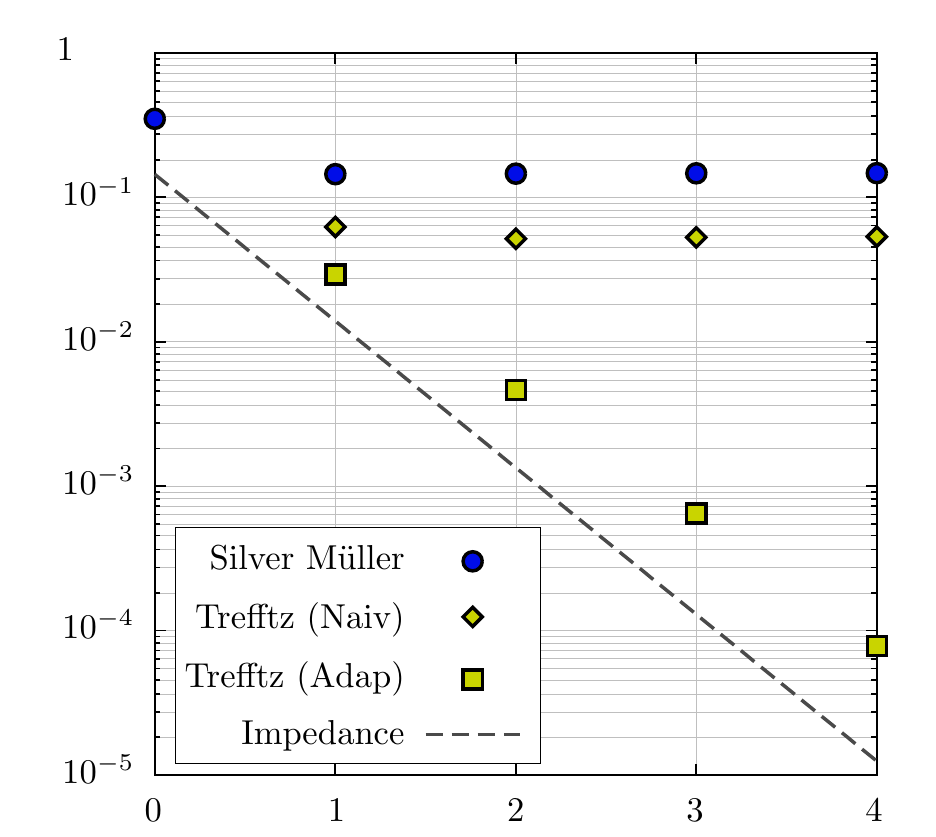}
 \caption[fig:dgvsgdt-error]{Relative error $\|\Eup - \Eup^h\|_{L^2(\Omega' \times(0,T))} / \|\Eup\|_{L^2(\Omega' \times(0,T))}$ versus polynomial degree $p$ for the Silver-M\"uller condition (blue circles), the new transparent boundary conditions, without a direction adaption (green diamonds) as well as with a direction adaption (green squares), and an exact PEC boundary condition (gray line).
} \label{fig:order_error}
\end{figure}
The first-order absorbing condition (blue circles) yields a
saturation due to a systematic consistency error arising from 
the fact that the wave does not impinge on the transparent boundary at normal
angle but at $45^\circ$.
If the essential directions of propagation are not well represented in the basis, the new transparent boundary condition (green diamonds), shows a similar saturation as the first-order absorbing condition.
However, if the essential directions of propagation are well represented in the basis the new transparent boundary condition (green boxes), exhibit spectral convergence.
%
%
%
%
For comparison, we also display (gray dashed line) the results obtained by employing  an exact PEC-like boundary condition \eqref{eq:pec} with $\beta=0$ and $\g=\E$, which may serve as a benchmark for the optimal results that can be expected. 

Let us note that the results obtained with the new transparent boundary condition strongly depend on the choice of directions in the construction of the basis. 
Optimal results are obtained only, if the essential directions of 
propagation of the solution are represented well in the basis. 
This will become obvious also in our second test problem and is in 
accordance with our considerations at the end of Section~\ref{sec:tbc}.
In the simulations above, a direction pointing in the propagation 
direction of the wave was incorporated in the construction of the basis. 
Since the choice of the directions in the construction of the basis 
can be adopted locally at every element to the main direction of propagation, 
the simulation results may still be considered representative.

In summary, we observe that, together with a proper choice of directions in the construction 
of the basis, the new transparent boundary conditions can exhibit exponential convergence.

%
\subsection{Energy dissipation behavior}\label{sec:nr:cylinder}
%

As a second test case, we consider the propagation of wave fields
of the form $\E=(0,0,\Eup)$, $\H=(\Hup_1,\Hup_2,0)$ evolving from the initial
conditions
\begin{align} \label{eq:initial2}
 \Eup^0(x,y)  =  \exp \left( - \left( x^2+ y^2 \right) /18\right), 
\qquad \Hup^0_1=\Hup^0_2=0,
\end{align}
through a homogeneous medium with material parameters $\eps=\mu=1$.
For ease of presentation, we again consider the quasi two-dimensional setting 
discussed in Section~\ref{sec:2d}.
Let us note that a semi-analytic formula for the solution could be obtained 
here via D'Alembert's formula. In the quasi two-dimensional setting, a closed 
form analytic solution is however not available.

As a reference solution, we therefore consider one obtained by numerical simulation 
on a large domain $\widehat \Omega' = (-30,30) \times (-30,30)$. 
For ist construction, we utilize the discontinuous Galerkin
Trefftz method on a uniform mesh with mesh-size $h=1$ and polynomial degree $p=3$.
First order absorbing boundary conditions are prescribed at the outer boundary
$\partial \widehat \Omega'$.  
The evolution of the electric field component $\E_z=\Eup$ is shown in~\figref{fig:pulse}.
\begin{figure}[!!ht]
\centering
\includegraphics[width=0.9\textwidth]{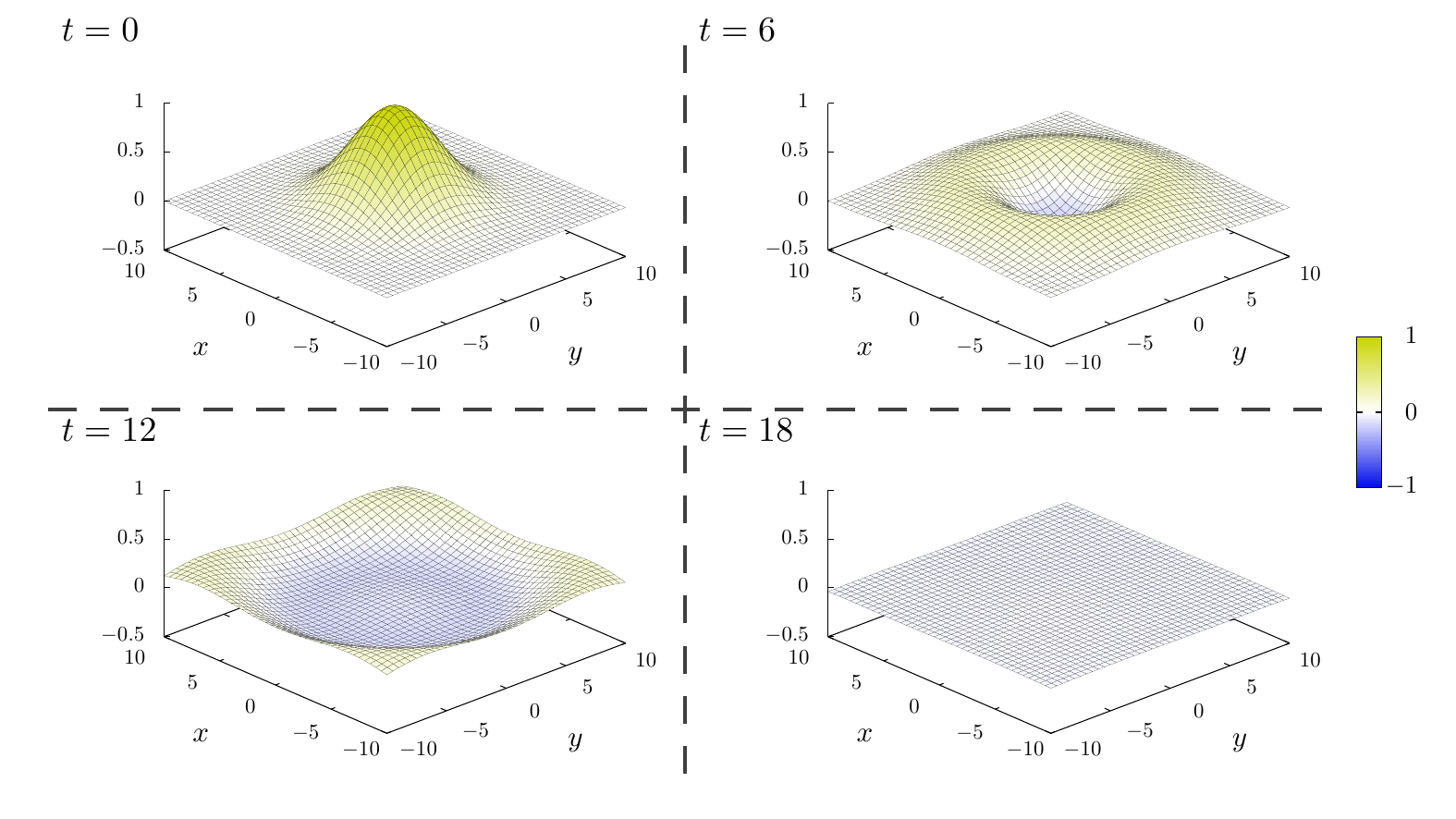}
 \caption[fig:vacuum]{Snapshots of the evolution of the electric field component $\E_z=\Eup$ of a cylindrical wave propagating through a homogeneous medium with parameters $\epsilon=\mu=1$ starting from initial condition \eqref{eq:initial2} depicted in the upper left corner.
} \label{fig:pulse}
\end{figure}
Since the propagation velocity is limited by $c=1/\sqrt{\eps\mu}=1$, 
the boundary condition at the far boundary will have no effect on the solution in the computational domain $\Omega'=(-10,10)\times (-10,10)$ of interest up to time $T=40$.
Note that in contrast to our first test case, the wave does not 
propagate at a fixed angle nor at a fixed velocity now, which can be seen from the two-dimensional D'Alembert formula. 
This will result in an algebraic decay of the energy contained 
in the computational domain $\Omega'$.
For our numerical tests, we consider the artificial restriction of the 
large domain $\widehat \Omega'$ to the computational domain 
$\Omega'=(-10,10) \times (-10,10)$.
Different types of transparent boundary conditions are used at the artificial boundary 
$\partial\Omega'$. 
Note that in this example, the wave front impinges at the artificial boundary at various angles. 
In our simulation, we initially use a uniform mesh with mesh-size $h=1$ resulting in 
$N=400$ rectangular elements. Simulations are conducted for polynomial degree $p=3$ and with time step size $\tau = h/2$ again. 
%
%
%

The evolution of the total energy contained in the computational domain $\Omega'$ is displayed in~\figref{fig:energy}.
\begin{figure}[!!ht]
\centering
\includegraphics[width=0.55\textwidth]{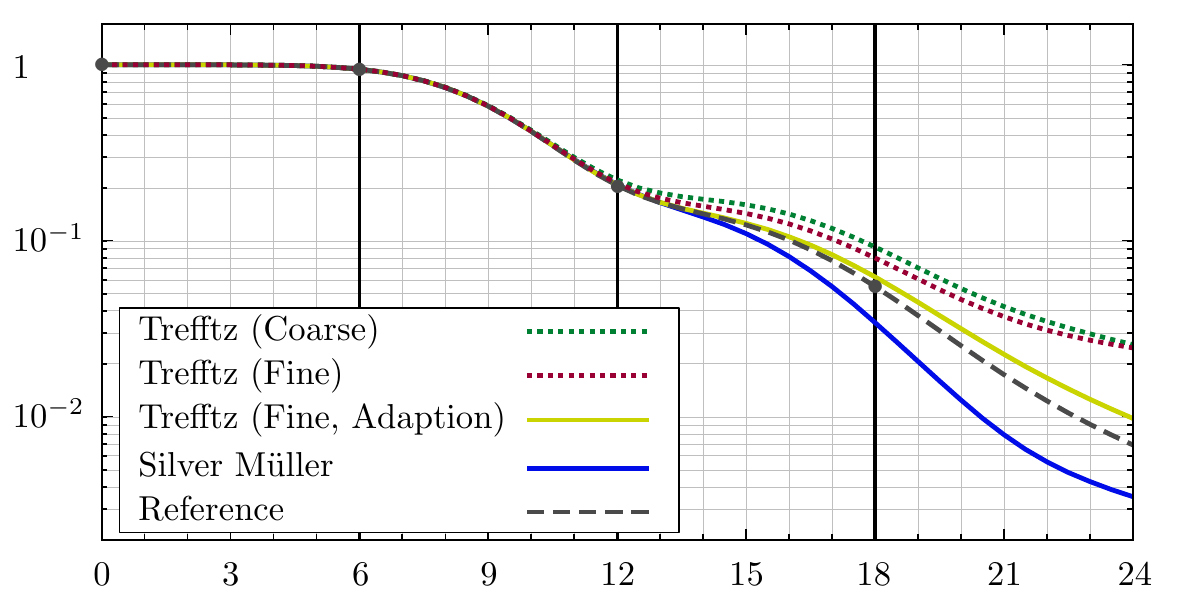}
 \caption[fig:energy]{Evolution of the electromagnetic energy contained in $\Omega'=(0,10)\times(0,10)$ for the reference solution (gray dashed line) and the simulation 
obtained on the truncated domain with transparent boundary conditions (green). 
} \label{fig:energy}
\end{figure}
In the first phase of the simulation, the wave propagates towards the artificial boundary
and the boundary conditions do not have any effect. Through the second phase,
the transparent boundary condition leads to the proper reduction in the energy.  
In the third and fourth phase, the energy is still decaying monotonically. 
The energy decay of the reference solution (gray dashed line), was computed 
on a large domain without artificial boundaries.

We now compare with the results obtained for with different choices of transparent boundary conditions. 
The first simulation (dotted dark green line) was obtained with the new transparent boundary condition \eqref{eq:abc} using the same set of directions in the Trefftz basis on every element. 
In the second case (dotted red line) the simulation was carried out on a finer mesh, i.e. with mesh-size $h=0.5$. 
Both test runs show an energy decay which is slower than that of the reference solution, indicating some amount of artificial reflection.
In the third case (solid green line), a local adaption of the Trefftz basis was employed, i.e., the directions were chosen such that the propagation of the wave front could be represented well. 
As a result, the artificial reflections could be reduced substantially and the energy decay became almost identical to that of the reference solution.
The Silver-Müller boundary condition (solid blue line), on the other hand, has a too dissipative behavior and leads to a much too strong damping of the solution. 
%


In summary, all boundary conditions lead to a decay in energy indicating a dissipative behavior. Together with a proper choice of directions, the new transparent boundary condition lead to the most realistic energy decay.
\section{Summary}\label{sec:summary}
%
In this paper we considered the implementation of a new type of transparent boundary condition in a space-time discontinuous Galerkin method using Trefftz polynomials. The approach is based on a local splitting of the field approximations into a superposition of plane waves and a proper penalization of components corresponding to incoming waves. The required decomposition is available, since we utilize a particular basis for the local Trefftz spaces consisting of polynomial plane wave functions. 
The general procedure is applicable to approximations of arbitrary order, and we observed spectral convergence of the error in our numerical tests, if the directions used in the construction of the polynomial plane wave basis are chosen appropriately. 
Also optimal orders of convergence with respect to the mesh-size were observed in this case. 
While the implementation of the new boundary conditions is similar to that of more standard conditions, like the first order absorbing Silver M\"uller boundary condition, 
the new condition performs substantially better in the our numerical tests.
%
In all cases, the new boundary condition shows a dissipative behavior,
which illustrates the stability of the approach. 
A very realistic energy decay could be obtained by a local adaption of the main directions of the basis functions. 
%
%
\section*{Acknowledgments} 
%
The authors would like to thank the two anonymous reviewers for many helpful suggestions.
The authors were supported by the German Research Foundation (DFG) under grants GSC~233, IRTG 1529, and TRR 154, by the Alexander von Humboldt-Foundation through a Feodor-Lynen research fellowship, and by the National Science Foundation (NSF) under Grant No. 1216927.

%



\end{document}